\documentclass[12, reqno]{amsart}
\usepackage{amsmath}
\usepackage{amscd,amsthm,amsfonts,amsopn,amssymb,verbatim}
\usepackage{mathrsfs}
\numberwithin{equation}{section}

\linespread{1.2}

\newtheorem{theorem}{Theorem} 

\newtheorem{proposition}[theorem]{Proposition}

\newtheorem{definition}{Definition}
\newtheorem{lemma}{Lemma}

\theoremstyle{remark}
\newtheorem{remark}{Remark}

\def\be{\begin{equation}}
\def\ee{\end{equation}}
\def\vp{\varphi}
\def\ve{\varepsilon}

\allowdisplaybreaks
\begin{document}
\setlength{\parskip}{2pt}
\begin{abstract}
The purpose of this Note is to provide a deterministic implementation of the random wave model for the number of nodal domains in the context of the two-dimensional torus.  The approach is based on recent work due to Nazarov and Sodin and arithmetical properties of lattice points on circles.
\end{abstract}

\title[On toral eigenfunctions and the random wave model]
{On toral eigenfunctions and the random wave model}
\author{Jean Bourgain$^*$}
\address{(J. Bourgain) Institute for Advanced Study, Princeton, NJ 08540}
\email{bourgain@math.ias.edu}
\thanks{$^*$  This work was partially supported by NSF grants DMS-0808042 and DMS-0835373}
\maketitle

\section
{Introduction}

This Note originates from the work of Nazarov and Sodin (\cite {N-S} and \cite{S}) on the behavior of the number of nodal domains of
random eigenfunctions at high energy.
It was sown in \cite{N-S} that the number $N_E$ of a random eigenfunction os $S^2$ of eigenvalue $E$ obeys the so-called random wave
model (RWM) for large $E$ and, with large probability, the ratio $4\pi\frac{N_E}E$ is close to a constant $\sigma>0$.
According to the Bogomolny-Schmit \cite{B-S1}, \cite{B-S2} prediction, this number $\sigma$ can be computed based on a bond
percolation model leading to a conjectured value
\be\label{1.1}
\sigma =\frac {3\sqrt 3-5}\pi \approx 0.0624...
\ee
While the \cite{N-S} work establishes in particular the positivity of $\sigma$, it does not shed any light on its actual value.
Note that \eqref{1.1} is considerably smaller than the general (deterministic) upper bound provided by Pleijel's inequality
\be\label{1.2}
\operatornamewithlimits{\lim\sup}\limits_{n\to\infty} \frac Nn \leq\Big(\frac 2j\Big)^2 =0.691..
\ee
with $j$ the smallest positive zero of the Bessel function $J_0$ and $n\asymp \frac E{4\pi}$ the wave number (see also
\cite {B} for a small improvement).

Better upper bounds on $\sigma$ may be obtained by evaluation of certain geometric parameters using Kac-Rice type
arguments.
It was shown in particular in \cite{K} that $\sigma\leq \frac 1{\sqrt 2\pi}= 0.225...$ by computation of the expected
number of horizontal tangencies to the nodal set.
The same bound may be gotten from its expected total curvature (cf. \cite {Ber}).

In what follows, we do not intend to study further the RWM or the Bogomolny-Schmit heuristics.
Rather, we are interested in a deterministic implementation of the RWM in certain situations.
The idea is very simple.
Assume $-\Delta f = Ef$ an eigenfunction for large $E$.
Fixing some base point $x\in M$ in the manifold $E$, we are considering restrictions $f_x$ of $f$ to neighborhoods of $x$
of the order $0\big(\frac 1{\sqrt E}\big)$ (in fact $\frac R{\sqrt E}$ with $R$ slowing growing to infinity with $E$).
In certain instances, one may then show that the ensemble $(f_x)_{x\in M}$ resembles that of a Gaussian random wave
function.
It turns out that for $M=\mathbb T^2$ the 2-dimensional flat torus and eigenfunctions
\be\label{1.3}
 f(x) =\sum_{|\xi|^2=E} a_\xi e(x\cdot\xi) \quad \big(e(a)=e^{2\pi ia}\big)
\ee
$(a_{-\xi}=\bar a_\xi)$ where $\mathcal E_E=\{\xi\in\mathbb Z^2; |\xi|^2=E\}$ satisfies suitable arithmetical assumptions and
$\sum_{|\xi|^2 =E} |a_\xi|^2 \delta_{\xi/\sqrt E}$ becomes well-distributed on the unit circle, this idea may be worked out
rather easily.
On the arithmetic side, we rely on angular equidistribution results \cite{E-H} (see also \cite {F-K-W} and related
references) and also the recent work \cite{B-B} on additive relations in the set $\mathcal E_E$.
Naturally, one runs into stability problems for the number of nodal domains when perturbing slightly the eigenfunctions,
but these analytical issues have been already completely addressed as part of the remarkable work of  Sodin and Nazarov.
In particular, extensive use is made from the results in \cite {S}.

Recall also that from a result due to A.~Stern \cite{St} (see also \cite{L}), there is no nontrivial lower bound on the number of nodal domains
for $E\to\infty$, which may equal two.
Thus for eigenfunctions \eqref{1.3}, some further assumptions are needed.
Possibly, the equidistribution of the measures $\sum_{|\xi|^2=E} |a_\xi|^2 \delta_{\xi/\sqrt E}$ on $S^1$ may suffice, but
we are only able to establish this in certain cases (for instance assuming $E$ has a bounded number of prime factors and
also in a statistical sense, i.e. for `most' $E$).

Beyond the arithmetical input and the results from \cite{S}, our analysis is essentially straightforward.
No effort has been made to obtain  quantitatively more refined results.
A more general outlook on the approach is discussed in the last section.

Let us return to our model $\mathbb T^2$ and be more specific.

Assume $E\in \mathbb Z_+$ a large odd integer which is a sum of 2 squares; we assume moreover $E$ of the form
\be\label{1.4}
E=\prod p_\alpha^{e_\alpha} \qquad (e_\alpha \geq 1)
\ee
where its prime factors $p_\alpha \equiv 1 (\text{mod\,} 4)$.
Denote
\be\label{1.5}
\mathcal E =\mathcal E_E=\{\xi=(\xi_1, \xi_2)\in\mathbb Z^2; \xi_1^2 +\xi_2^2 =E\}.
\ee
Identifying $(\xi_1, \xi_2)$ with the Gaussian integer $\xi_1+i\xi_2\in \mathbb Z+i\mathbb Z$ and denoting $p_\alpha
=\pi_\alpha \bar\pi_\alpha$ the factorization of $p_\alpha$ in Gaussian primes, the set $\mathcal E$ is obtained as
\be\label{1.6}
\Big\{\prod_\alpha \pi_\alpha^j \bar\pi_\alpha^{e_\alpha-j}; 0\leq j\leq e_\alpha\Big\}
\ee
up to multiplication by $\pm 1$ and $\pm i$. In particular
\be\label{1.7}
|\mathcal E| = 4.\prod (1+e_\alpha)=W.
\ee

Writing $E=\lambda^2$ and
\be\label{1.8}
\pi_\alpha =|\pi_\alpha|\, e^{i\theta_\alpha}
\ee
we obtain
\be\label{1.9}
\xi=\lambda e^{i\psi} \text { for } \ \xi\in\mathcal E_E
\ee
whit angles
\be\label{1.10}
\psi =\sum_\alpha (2j_\alpha -e_\alpha)\theta_\alpha \text { and } \ 0\leq j_\alpha \le e_\alpha
\ee
up to multiples of $\frac\pi 2$.

The eigenfunctions of the Laplacian $-\Delta$ on $\mathbb T^2$ with eigenvalue $E$ are obtained as trigonometric
polynomials
\be\label {1.11}
f = \sum_{\xi\in\mathcal E_E} a_\xi e(x.\xi).
\ee
Let us consider for simplicity the eigenfunction
\be\label {1.12}
\sum_{\xi \in \mathcal E_E} e(x.\xi).
\ee
Our considerations in the remainder of the paper carry over verbatim to the situation \eqref{1.11} with $|a_\xi|, \xi\in
\mathcal E$ equal and more general statements with arbitrary coefficients $(a_\xi)_{\xi\in\mathcal E}$ will be discussed later.

Our aim is to show that under suitable assumptions on $E\to\infty$, the number of nodal domains of \eqref{1.12} obeys the
RWM.
These assumptions are of arithmetical nature and may be loosely formulated as follows

\begin{itemize}
\item [(D)] The points $\{\lambda^{-1} \xi; \xi\in\mathcal E_E\}$ become equidistributed on the unit circle for
$E\to\infty$.

\item [(I)] There are not to many non-trivial additive relations among the elements of $\mathcal E$.
\end{itemize}

While we only need (D) without further quantification, a more precise form of (I) will be required (See Definition 
\ref{definition1}
 and Proposition \ref{proposition1}).
Properties (D) and (I) may be addressed by classical results in number theory.
By \eqref{1.10}, (D) relates to angular distribution of Gaussian primes and we will refer to the results from \cite
{E-H}.
A powerful tool to deduce bounds on the number of additive relations is provided by \cite{E-S-S} on unit equations
\be\label{1.13}
a_1\xi_1+\cdots+ a_\ell \, \xi_\ell =1
\ee
with $\xi_1, \ldots, \xi_\ell$ taken in a multiplicative subgroup $G$ of $\mathbb C^*$ of bounded 
rank (though the available results require some further assumption on the number of prime factors of $E$ to be
applicable to our problem).
Alternatively, one may use the `statistical' results on additive relations proven in \cite{B-B} to treat the case of
`typical' $E$.
Precise statements will be given in section 4 (Theorems \ref{theorem2}, \ref{theorem3}, \ref{theorem4}).

\section
{Local Analysis of the eigenfunction}

Let $\mathbb T^2$ be equipped with normalized measure and let
\be\label{2.1} 
f(x) =\frac 1{\sqrt W} \sum_{\xi\in\mathcal E} e(x.\xi)
\ee
with $\mathcal E=\mathcal E_E$, $E=\lambda^2$ and $W=|\mathcal E|$.
We always assume $W\to \infty$ with $E\to \infty$.

In what follows, we will need several parameters, chosen in a particular 
order, that will be viewed as $O(1)$ for fixed
$E$ and eventually will tend to infinity with $E\to \infty$ at sufficiently slow rate.

Let $1\ll K=o(W)$ be a first large parameter and subdivide $\lambda S^1$ in arcs of size $\frac \lambda K$, leading to a
corresponding partition
\be\label{2.2}
\mathcal E =\bigcup^K_{k=1} \mathcal E^{(k)}.
\ee
More specifically, we subdivide the first quadrant of $\lambda S^1$ and partition the other regions by reflection and
symmetry.
According to (D), assume that
\be\label{2.3}
\Big(\frac 1K-\ve_1\Big) W< |\mathcal E^{(k)}|<\Big(\frac 1K+\ve_1\Big)W
\ee
for each $k= 1, \ldots, K$, where $\ve_1= \ve_1(K)$.
Choose a point $\xi^{(k)} \in\mathcal E^{(k)}$, letting $\xi^{(k')} =-\xi^{(k)}$ if $\mathcal E^{(k')}=-\mathcal E^{(k)}$.

Let $R\gg 1$ be another parameter and denote
\be\label{2.4}
\zeta^{(k)} =\frac R\lambda \xi^{(k)}.
\ee
Hence $|\zeta^{(k)}|=R$.
Fixing $x\in\mathbb T^2$, translate $x$ by $\frac R\lambda y$ with $y=(y_1, y_2)\in [-\frac 12, \frac 12]^2$ and write 
\be\label{2.5}
F_x(y)=f\Big(x+\frac R\lambda y\Big)= \frac 1{\sqrt K} \sum^K_{k=1} f_k(x, y) e(\zeta^{(k)} . y)
\ee
with
\be\label{2.6}
f_k(x, y)=\sqrt{\frac KW} \sum_{\xi\in\mathcal E^{(k)}} 
e\Big(\xi.x+\Big(\frac {R\xi}\lambda -\zeta^{(k)}\Big).y\Big).
\ee
Denote further
\be\label{2.7}
\vp(y) =\vp_x (y) =\frac 1{\sqrt K} \sum^K_{k=1} c_k(x) e(\zeta^{(k)}.y)
\ee
with
\be\label{2.8}
c_k(x)=f_k(x, 0).
\ee

Our next goal is to show the following

\begin{itemize}
\item[(i)]  For most $x$, $\vp_x$ is a perturbation of $F_x$ considered as function of $y$,
\item[(ii)] The random vector $\{c_k(x)\}_{1\leq k\leq K}$ with $x$ ranging in $\mathbb T^2$ has approximatively the same
distribution as the Gaussian vector $\{g_k\}_{1\leq k\leq K}$, with $g_1, \ldots, g_k$ IID normalized complex
Gaussians, subject to the reality condition $g_{k'} = \bar g_k $ for $\zeta^{(k')} =-\zeta^{(k)}$.
At this point, we will then be able to rely on the results from \cite {S}.
\end{itemize}

Note that in (ii), we should see $K$ as fixed and the distributional approximation sufficient in order for the relevant Gaussian
estimates from \cite {S} to carry over.

\begin{lemma} \label{lemma1}
For any fixed $s\geq 1$,
\be\label{2.9}
\Vert F_x -\vp_x \Vert _{L^2_xC_y^s} < R^{C_s} K^{-1}.
\ee
\end{lemma}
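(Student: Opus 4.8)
The plan is to diagonalize the difference $F_x-\vp_x$ in the $x$-variable, where the exponentials $\{e(\xi\cdot x)\}_{\xi\in\mathcal E}$ form an orthonormal family on $\mathbb T^2$, and to cash in this orthogonality \emph{before} taking the supremum in $y$. For fixed $y$ both $F_x$ and $\vp_x$ are trigonometric polynomials in $x$ supported on $\mathcal E$, so it suffices to compare Fourier coefficients. For $\xi\in\mathcal E$ write $k=k(\xi)$ for the index with $\xi\in\mathcal E^{(k)}$. Expanding \eqref{2.5}--\eqref{2.6} and using $\zeta^{(k)}=\frac R\lambda\xi^{(k)}$, the phase $\big(\tfrac{R\xi}\lambda-\zeta^{(k)}\big)\cdot y$ recombines with $\zeta^{(k)}\cdot y$, so that $F_x(y)=\frac1{\sqrt W}\sum_\xi e(\xi\cdot x)\,e\big(\tfrac R\lambda\xi\cdot y\big)$, while \eqref{2.7}--\eqref{2.8} give $\vp_x(y)=\frac1{\sqrt W}\sum_\xi e(\xi\cdot x)\,e\big(\zeta^{(k(\xi))}\cdot y\big)$. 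Hence the coefficient of $e(\xi\cdot x)$ in $F_x-\vp_x$ is exactly
\[
\frac1{\sqrt W}\Big[e\big(\tfrac R\lambda\xi\cdot y\big)-e\big(\zeta^{(k)}\cdot y\big)\Big],\qquad k=k(\xi).
\]

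The quantitative input is a frequency–separation estimate. Setting $a_\xi=\frac R\lambda\xi$ and $b_\xi=\zeta^{(k(\xi))}$, both vectors have modulus exactly $R$ (since $|\xi|=\lambda$ and $|\zeta^{(k)}|=R$), whereas $\xi$ and $\xi^{(k)}$ lie in a common arc of $\lambda S^1$ of length $\frac\lambda K$; therefore $|a_\xi-b_\xi|=\frac R\lambda|\xi-\xi^{(k)}|\lesssim \frac RK$. Applying $\partial_y^\alpha$ to the coefficient above yields $(2\pi i a_\xi)^\alpha e(a_\xi\cdot y)-(2\pi i b_\xi)^\alpha e(b_\xi\cdot y)$, and since $v\mapsto(2\pi i v)^\alpha e(v\cdot y)$ has gradient of size $O(R^{|\alpha|})$ on the segment joining $b_\xi$ to $a_\xi$ (there $|v|\le R$ and $|y|\le 1$), the mean value theorem gives, for all $|\alpha|\le s$ and all $y\in[-\tfrac12,\tfrac12]^2$,
\[
\Big|\partial_y^\alpha\big[e(a_\xi\cdot y)-e(b_\xi\cdot y)\big]\Big|\lesssim R^{|\alpha|}\,|a_\xi-b_\xi|\lesssim \frac{R^{s+1}}K .
\]

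Next I would invoke orthogonality in $x$. For each fixed $y$ and each multi-index $\alpha$, Parseval on $\mathbb T^2$ gives
\[
\big\|\partial_y^\alpha(F_x-\vp_x)(y)\big\|_{L^2_x}^2=\frac1W\sum_{\xi\in\mathcal E}\Big|\partial_y^\alpha\big[e(a_\xi\cdot y)-e(b_\xi\cdot y)\big]\Big|^2\lesssim \frac1W\cdot W\cdot\frac{R^{2(s+1)}}{K^2}=\frac{R^{2(s+1)}}{K^2},
\]
the decisive point being that the factor $W=|\mathcal E|$ cancels. This is precisely where the gain resides: a naive pointwise-in-$x$ triangle inequality over the $W$ terms would instead lose a factor $\sqrt W$, which is why the $L^2_x$ norm must be taken before the supremum in $y$.

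It remains to commute the $C^s_y$ supremum past the $L^2_x$ integral. Here I would use the Sobolev embedding $C^s_y\hookrightarrow H^{s+2}_y$ on the cube $[-\tfrac12,\tfrac12]^2$ (valid since $s+2>s+1=s+\frac d2$ for $d=2$) together with Fubini, to bound
\[
\|F_x-\vp_x\|_{L^2_xC^s_y}\lesssim\|F_x-\vp_x\|_{L^2_xH^{s+2}_y}=\Big(\sum_{|\alpha|\le s+2}\int_{[-\frac12,\frac12]^2}\big\|\partial_y^\alpha(F_x-\vp_x)(y)\big\|_{L^2_x}^2\,dy\Big)^{1/2}\lesssim \frac{R^{s+3}}K,
\]
the last step applying the previous display with $s$ replaced by $s+2$. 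Choosing $C_s=s+3$ (or any larger constant) then yields \eqref{2.9}. None of these steps is genuinely hard; the only point requiring care is the ordering of the norms, and correspondingly the use of a Sobolev embedding in $y$ to convert the $C^s$ supremum into an $L^2$-based quantity compatible with the orthogonality cancellation in $x$.
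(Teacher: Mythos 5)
Your proof is correct and follows essentially the same route as the paper's (much terser) argument: bound the $C^s_y$-norm by the $H^{s+2}_y$-norm via Sobolev embedding, use Parseval in $x$ so that the factor $W=|\mathcal E|$ cancels, and control each Fourier coefficient by the frequency separation $|\tfrac R\lambda\xi-\zeta^{(k)}|\lesssim R/K$. You have merely filled in the details the paper leaves implicit, so nothing further is needed.
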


\begin{proof}

Since from standard Sobolev estimates, we may bound the $C^s$-norm by the $H^{s+2}$-norm, it suffices to estimate
$$
\Big\Vert f\Big(x+\frac R\lambda y\Big) -\vp_x\Big\Vert_{L^2_xH^s_y} \leq 
C\frac {R^s}{\sqrt W} \Big(\sum^K_{k=1} 
 \sum_{\xi\in\mathcal E^{(k)}}  \Big|\frac R\lambda\xi -\zeta^{(k)}|^2\Big)^{\frac 12} < C\frac {R^{s+1}}K
$$
by \eqref{2.5}--\eqref{2.8}.
\end{proof}

It follows from \eqref{2.9} that after fixing $R$, we may ensure, taking $K$ sufficiently large, that
\be\label{2.10}
\Vert F_x-\vp_x\Vert_{C^S} = o(1)
\ee
for most $x\in\mathbb T^2$.

We now turn our attention to the joint distribution of the vector 
$\{ C_k(x); 1\leq k\leq K\}$,
\be\label{2.11}
C_k(x)=\sqrt {\frac KW} \sum_{\xi\in\mathcal E^{(k)}} e(\xi. x)
\ee
when $x$ ranges in $\mathbb T^2$.

Switching notation a bit, it will be convenient to replace $K$ by $2K$ and 
enumerate $\mathcal E^{(1)}, \ldots, \mathcal E^{(K)},
\mathcal E^{(-1)}, \ldots, \mathcal E^{(-K)}$ with $\mathcal E^{(-k)} =-\mathcal E^{(k)}$.
Obviously $c_{-k}=\bar c_k$.

We specify assumption (I) as follows.

\begin{definition}
\label{definition1}
Fix $0<\gamma<\frac 12$ and some $B\in\mathbb Z_+$.
We say that $\mathcal E$ satisfies property $I(\gamma, B)$ if for $2<\ell\leq B$, the number of non-degenerate additive relations of the
form
\be\label{2.12}
\xi_1+\cdots+ \xi_\ell =0
\ee
among elements $\xi_1, \ldots, \xi_\ell \in\mathcal E$ is at most $N^{\gamma\ell}$.
By `non-degenerate', we mean that in \eqref{2.12} there is no proper vanishing sub-sum.
\end{definition}

There are various ways to select energies $E$ for which above independence property holds and this will be addressed in a later section.

\begin{definition}\label{definition2}
Let $\ve>0$ be a small parameter. 
Say that the random vector $(c_1, \ldots, c_k)$ where the $c_j$ are $\mathbb C$-valued functions, 
$\Vert
c_j\Vert_2 \sim 1$, is $\ve$-Gaussian, provided for any (possibly unbounded) intervals $I_1, J_1, \ldots, I_K, J_K\subset \mathbb R$, we
have
\be\label{2.13}
\Big|\text{mes}[c_1\in I_1 \times J_1, \ldots, c_k\in I_K\times J_K] -\frac 1{(2\pi)^K}\int_
{I_1 \times J_1\times\cdots\times  I_K\times J_K}
 e^{-\frac 12 (x_1^2+ y_1^2+\cdots+x_K^2+y_K^2)} dxdy\Big|<\ve.
\ee
\end{definition}

Choosing $\ve$ sufficiently small (in particular depending on $K$), \eqref{2.13} will enable to approximate for `nice' open sets
$\Omega\subset\mathbb C^K$, $\text{mes} [(c_1, \ldots, c_K)\in\Omega]$ by the corresponding Gaussian measure. We prove

\begin{lemma}\label{lemma2}
Given $\ve>0$, there is $B=B(K, \ve)$ such that if $\mathcal E$ satisfies $I(\gamma, B)$, then the vector function $(c_1, \ldots, c_K)$
on $\mathbb T^2$ as defined above, is $\ve$-Gaussian.
\end{lemma}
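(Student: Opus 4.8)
The plan is to prove the $\varepsilon$-Gaussian property \eqref{2.13} by the method of moments. Since $\bar c_k=c_{-k}$, every joint moment of $(c_1,\dots,c_K)$ and its conjugates can be written as $\int_{\mathbb T^2}\prod_{j=1}^r c_{k_j}(x)\,dx$ with indices $k_j\in\{\pm1,\dots,\pm K\}$ and $r\le B$. I would first reduce the distributional statement to the convergence of these moments by a standard comparison argument: smooth the indicator of a box $\prod_j(I_j\times J_j)$ at a small scale, approximate the resulting smooth function by a polynomial on a large ball $\{|c|\le T\}$, and control the complementary region by the Gaussian tail together with a Chebyshev bound coming from $\int_{\mathbb T^2}|c_k|^2\,dx\sim1$ (and finitely many higher even moments, all $O(1)$ by the computation below). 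This shows it suffices to match all joint moments up to some order $B=B(K,\varepsilon)$ to within $o(1)$; it is precisely here that the order $B$, and hence the strength $I(\gamma,B)$ of the hypothesis, is fixed in terms of $K$ and $\varepsilon$.

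The second step is the moment identity. Expanding \eqref{2.11} and integrating over $\mathbb T^2$ annihilates all frequencies except the vanishing ones, giving
\[
\int_{\mathbb T^2}\prod_{j=1}^r c_{k_j}(x)\,dx=\Big(\frac KW\Big)^{r/2}\,\#\Big\{(\xi_j)_{j=1}^r:\ \xi_j\in\mathcal E^{(k_j)}\ \text{for all }j,\ \sum_{j=1}^r\xi_j=0\Big\}.
\]
I would then decompose each solution according to its finest partition into minimal vanishing sub-sums, i.e. into non-degenerate \emph{blocks} in the sense of Definition \ref{definition1}. A block of size $2$ forces $\xi_{j'}=-\xi_j$, hence $k_{j'}=-k_j$, and contributes $|\mathcal E^{(k_j)}|=(1+O(\varepsilon_1))\,W/K$ choices by \eqref{2.3}; after normalization each such block contributes $1+O(\varepsilon_1)$. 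Summing over all perfect pairings of $\{1,\dots,r\}$ with $k_i+k_j=0$ reproduces exactly the Wick--Isserlis expansion of the corresponding Gaussian moment (the covariance being $\mathbb E[g_ag_b]=\delta_{a+b,0}$), up to a multiplicative error $(1+O(\varepsilon_1))^{r/2}=1+O(\varepsilon_1B)$. Odd $r$ and configurations whose $k$'s cannot be paired produce no pairing term, matching the vanishing of the associated Gaussian moment.

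The remaining, and main, step is to show that every partition containing a block of size $\ell\ge3$ is negligible. For such a block, property $I(\gamma,B)$ bounds the number of relations by $W^{\gamma\ell}$ (with $N=|\mathcal E|=W$), while each size-$2$ block is bounded by $O(W/K)$; after dividing by the normalization $(K/W)^{r/2}=\prod_t(K/W)^{\ell_t/2}$, a size-$\ell$ block with $\ell\ge3$ contributes at most $K^{\ell/2}W^{-(\frac12-\gamma)\ell}$. Since $\gamma<\frac12$ this is a negative power of $W$, at worst $W^{-3(\frac12-\gamma)}$, and $K^{\ell/2}\le K^{B/2}$ is harmless for fixed $K$. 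As there are at most $B^{B}$ partitions of $\{1,\dots,r\}$, the total contribution of all solutions with a block of size $\ge3$ is $O_{K,B}\big(W^{-3(\frac12-\gamma)}\big)=o(1)$ as $E\to\infty$. I expect this power count—controlling long relations by $I(\gamma,B)$ and exploiting $\gamma<\frac12$ to beat the diagonal normalization—to be the crux; the rest is bookkeeping. Combining the three steps, for $E$ large (so that both $\varepsilon_1B$ and the $o(1)$ error are $\ll\varepsilon$) all moments up to order $B$ agree with the Gaussian ones, and the reduction of the first step yields \eqref{2.13}.
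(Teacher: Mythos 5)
Your proposal is correct and follows essentially the same route as the paper: reduce to joint moments of order $\le B(K,\ve)$, identify each moment with $(K/W)^{r/2}$ times a count of vanishing sums in $\mathcal E$, recover the Gaussian (Wick) moments from the paired/trivial solutions using \eqref{2.3}, and kill every configuration containing a non-degenerate block of length $\ell\ge 3$ via $I(\gamma,B)$ through the same exponent count $W^{\gamma\ell-\ell/2}\le W^{-3(\frac12-\gamma)}$. The only cosmetic difference is that the paper handles the trivial solutions by comparison with the random-phase model $\tilde c_k$ on $\mathbb T^W$ and the central limit theorem, whereas you write out the Isserlis pairing expansion directly; these are interchangeable.
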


\begin{proof}
Well-known arguments reduce the problem to evaluating moments
\be\label{2.14}
\int_{\mathbb T^2} c_1^{r_1} \bar c_1^{s_1} \cdots c_K^{r_K} \bar c_K^{s_K}
\ee
with $r_1, s_1, \ldots, r_K, s_K\in\mathbb Z_+\cup\{0\}$ and
$$
r_1+s_1+\cdots+ r_K+s_K < B_1(K, \ve).
$$

Recall that the procedure consists indeed in evaluating the characteristic function
\be\label{2.15}
\int_{\mathbb T^2} e^{[\alpha_1 \text{ Re\,} c_1+\beta_1 \text { Im\,} c_1 +\cdots+\alpha_K \text { Re\,} c_k+\beta_k\text { Im\,} c_K]}
dx
\ee
with $\alpha_1, \beta_1, \ldots, \alpha_K, \beta_K\in\mathbb R$, subject to some bound $B_2(k, \ve)$.
Those arise by suitable truncations of the Fourier transform of intervals. Then, imposing some bound on $|c_1|, \ldots, |c_K|$, Taylor
expansion of the exponentials in \eqref{2.15}  leads to the expressions \eqref{2.14}.

Substituting \eqref{2.11} in \eqref {2.14} gives
$$
\Big( \frac KW\Big)^{\frac 12(r_1+\ldots+r_K+s_1+\cdots+ s_K)}. \ (2.16)
$$
where (2.16) stands for
 the number of relations
\setcounter{equation}{16}
\be\label{2.17}
\xi_{1,1}+\cdots+\xi_{1, r_1} -\xi_{1, 1}' -\cdots-\xi_{1, s_1}'+
\cdots+\xi_{K,1} +\cdots+\xi_{K, r_K} -\xi_{K, 1}' -\cdots -\xi_{K, s_K}'=0
\ee
with $\xi_{1, 1}, \cdots, \xi_{1, r_1}$ and $\xi_{1, 1}' , \cdots, 
\xi_{1, s_1}' \in \mathcal E^{(1)}, \ldots$

Trivial solutions to \eqref{2.17} are those for which the multi-sets (i.e. taking into account multiplicities)
\be\label{2.18}
\{ \xi_{1, 1}, \ldots, \xi_{1, r_1}, \ldots, \xi_{K, 1}, \ldots, \xi_{K, r_K}\}
\ee 
and
$$
\{ \xi_{1}', \ldots, \xi_{1, s_1}', \ldots, \xi_{K, 1}', \ldots, \xi_{K, s_K}'\}
$$
coincide.
Of course, to have a trivial solution requires
\be\label{2.19}
r_1= s_1,\ldots, r_K=s_K.
\ee

Otherwise, we call the solution non-trivial.

Consider first the contribution of trivial solutions, assuming \eqref{2.19}.

Denote $\Omega =\mathbb T^W$ and define $\tilde c_1, \ldots, \tilde c_K$ on $\mathbb T^K$ by
\be\label{2.20}
\tilde c_k(\Psi)=\sqrt{\frac KW} \sum_{\xi \in\mathcal E^{(k)}} e(\psi_\xi)
\text { with } \Psi =(\psi_\xi)_{\xi\in\mathcal E}.
\ee
Recalling \eqref{2.3} and taking into account the central limit theorem, the distribution of $(\tilde c_1, \ldots, \tilde c_K)$ is
approximatively Gaussian.
The trivial solutions to \eqref{2.17} contribute for
\be\label{2.21}
\int_\Omega|\tilde c_1| ^{ 2r_1} \cdots |\tilde c_K|^{2r_K}\cong \int|g_1|^{2r_1} \cdots |g_k|^{2r_K}.
\ee
Consider next the contribution of non-trivial relations, which will be evaluated using our arithmetical assumption.
 Their number is
obviously bounded by the number of nontrivial relations
\be\label {2.22}
\xi_1+\cdots +\xi_\ell =0 \text { with } \ell = r_1 +\cdots+ r_K+s_1+\cdots+ s_K
\ee 
in elements $\xi$ from $\mathcal E$. 
Partitioning (20) in minimal vanishing sub-sums, at least one of these relations will be non-trivial and therefore of length
$\ell'\geq 3$.
Property $I(\gamma, B)$, $B\geq \ell$, clearly implies the following bound
\be\label {2.23}
C(\ell)\sum_{2\nu \leq\ell -3} 
W^\nu \, W^{\gamma(\ell -2\nu)} < C(\ell) \, W^{\frac \ell 2}. W^{-3(\frac 12-\gamma)}.
\ee
Multiplying with $\big(\frac KW\big)^{\frac \ell 2}$, the resulting contribution of the non-trivial relations \eqref{2.17}
in \eqref{2.14} is therefore at most
\be\label{2.24}
B_3(K, \ve). W^{-(\frac 12-\gamma)}
\ee
which can be made arbitrarily small for $W$ large enough.

This proves Lemma \ref{lemma2}.
\end{proof}
\medskip

\section
{The number of nodal domains}

Consider the eigenfunction \eqref{2.1} on $\mathbb T^2$.

From general theory, the total length of the zero set $Z(f)=\{x\in\mathbb T^2; 
f(x) =0\}$ is $O(\lambda)$ while each nodal domain has area at least $O(\lambda^{-2})$.
In particular, it follows that the number of nodal domains of diameter at 
least $\ve_2^{-1}\lambda^{-1}$ is at most $O(\ve_2\lambda^2)$.
Here $\ve_2$ is a small fixed constant.

Choosing $R$ sufficiently large, it clearly follows from the preceding that
\be\label{3.1}
N_f =\frac {\lambda^2}{R^2} \int_{\mathbb T^2} N_f\Big(x, \frac {R} \lambda\Big)dx+O\Big(\ve_2\lambda^2+\frac {\lambda^2}{R\ve_2}\Big)
\ee
where $N_f$ is the number of nodal domains of $f$ and $N_f(x, \rho)$ the number of nodal domains contained in the open box
$x+(]-\frac \rho 2, \frac \rho 2[\times]-\frac\rho 2, \frac\rho2[)$.

Using our notation \eqref{2.5}, te first term on the rhs of \eqref{3.1} equals
\be\label{3.2}
\frac{\lambda^2}{R^2} \int_{\mathbb T^2} N_{F_x} dx
\ee
with $N_F$ the number of components of $Z(F)$ contained in $]-\frac 12, \frac 12[\times]-\frac 12, \frac 12[$.
Note also that
\be\label{3.3}
N_{F_x}=N_f\Big( x, \frac R\lambda\Big) < O(R^2).
\ee

Let $\vp$ be defined by \eqref{2.7}.
According to Lemma \ref{lemma1}
\be\label{3.4}
\int\Vert F_x -\vp_x\Vert_{C^1} \ dx < R^CK^{-1}.
\ee
Hence, fixing another parameter $\ve_3>0$, it follows that
\be\label{3.5}
\Vert F_x-\vp_x\Vert_{C_1} < \ve_3
\ee
except for $x$ in a subset $V\subset \mathbb T^2$ of measure at most $\ve_3^{-1} R^C K^{-1} <\ve _2$, taking 
$K$ sufficiently large.
Since then, by \eqref{3.3}
$$
\frac{\lambda^2}{R^2} \int_V N_{F_x} dx < O(\lambda^2.|V|)< \ve_2\lambda^2
$$
we may replace \eqref{3.2}, up to $O(\ve_2\lambda^2)$, by
\be\label{3.6}
\frac {\lambda^2}{R^2} \int_{\mathbb T^2\backslash V} N_{\vp_x+\psi_x} \, dx
\ee
where $\Vert\psi_x\Vert_{C^1}< \ve_3$.

For $x\in V$, set $\psi_x=0$.
Since the function $\vp_x$ on $\mathbb R^2$ satisfies $-\Delta \vp_x= R^2\vp_x$, it follows
again from the Faber-Krahn inequality that each nodal domain of $\vp_x$ is of area at least $O(\frac 1{R^2})$ and hence $N_{\vp_x}<0(R^2)$.
Thus
$$
\frac {\lambda^2}{R^2} \int_V N_{\vp_x} dx < O(\lambda^2|V|)<\ve_2 \lambda^2
$$
and in \eqref{3.6}, the integral may be extended to $\mathbb T^2$.
Consequently, we obtain
\be\label{3.7}
\frac{\lambda^2}{R^2} \int_{\mathbb T^2} N_{\vp_x+\psi_x} dx.
\ee
The next step consists in invoking Lemma \ref{lemma2}, which asserts that for $W$ sufficiently large, 
the ensemble $(\vp_x)_{x\in\mathbb T^2}$ has approximately the same distribution  as the 
Gaussian random function
\be\label{3.8}
\Phi_\omega =\frac 1{\sqrt K}\sum^K_{k=1} g_k(\omega) e(\zeta^{(k)} .y)
\ee
with $\{g_k\}$ IID normalized complex Gaussians subject to the condition $g_{k'}=\bar g_k$ for $\zeta^{(k')}=-\zeta^{(k)}$.

We claim that by choosing $\ve$ small enough in Lemma \ref{lemma2}, one can replace \eqref{3.7} by
\be\label{3.9}
\frac{\lambda^2}{R^2} \int N_{\Phi_\omega+\Psi_\omega} d\omega
\ee
where $\Psi_\omega$ is some perturbing function, satisfying
\be\label {3.10}
\Vert\Psi_\omega\Vert_{C^1} < 2\ve_{3}
\ee
and
\be\label{3.11}
-\Delta (\Phi_\omega+\Psi_\omega) =R^2 (\Phi_\omega+\Psi_\omega).
\ee
\medskip

\noindent
{\bf Proof of the claim}

Take $M\sim \sqrt{\log K} \sqrt{\log \frac R{\ve_2}}$ and subdivide the $M$-cube centered at $0$ in $\mathbb C^K$ in cubes $Q_\alpha$ of size $\ve_4=\frac
{\ve_3}{R\sqrt K}$.
Their number is $O\big((M\ve_4^{-1})^{2K}\big)$.

For each $\alpha$, denote
$$
A_\alpha =\{x\in\mathbb T^2; \big(c_k(x)\big)_{1\leq k\leq K}\in Q_\alpha\}
$$
and
$$
B_\alpha =\{\omega\in\Omega; \big(g_k(\omega)\big)_{1\leq k\leq K}\in Q_\alpha\}
$$
with $\Omega$ the probability space on which $\Phi_\omega$ is defined.
According to Lemma \ref{lemma2}, we can
ensure that
$$
\big||A_\alpha|-|B_\alpha|\big| <\ve \text { for each $\alpha$}.
$$
Note that $|B_\alpha|>\delta (K, M, \ve_4)$ and hence, for $\ve$ small enough, we may ensure
\be\label{3.12}
\big|| A_\alpha|-|B_\alpha|\big|< \frac {\ve_2}{2R^2}|B_\alpha|.
\ee
This permits to introduce subsets $A_\alpha' \subset A_\alpha, B_\alpha'\subset B_\alpha$, such that
\be\label{3.13}
|A_\alpha'|=|B_\alpha'|> \Big(1-\frac {\ve_2}{2R^2}\Big)|B_\alpha|
\ee
and a measure preserving map
$$
\tau_\alpha:B_\alpha'\to A_\alpha'.
$$
Define on $\cup B_\alpha'$
\be\label{3.14}
\Psi_\omega(y)=\frac 1{\sqrt K} \sum^K_{k=1} [c_k \big(\tau_\alpha(\omega)\big)-g_k(\omega)] e(\zeta^{(k)}.y)+\psi_{\tau_\alpha(\omega)}
\ee
and set
$$
\Psi_\omega =0 \text { if } \omega\not\in \cup B_\alpha'.
$$
With this construction,
\begin{alignat}{1}\label{3.15}
&\int N_{\Phi_\omega+\Psi_\omega} d\omega = \notag\\
&\sum_\alpha \int_{B_\alpha'} N_{\vp_{\tau_\alpha (\omega)}+\psi_{\tau_\alpha (\omega)}} d\omega +\int_{(\cup B_\alpha')^c} N_{\Phi_\omega} d\omega\notag\\
&=\sum_\alpha \int_{A_\alpha'} N_{\vp_x+\psi_x} dx +O\big(R^2|(\cup B_\alpha')^c|\big)\notag\\
&=\int_{\mathbb T^2} N_{\vp_x+\psi_x} dx+ O\big(R^2|(\cup A_\alpha')^c|\big)+O\big(R^2 |(\cup B_\alpha')^c|\big)
\end{alignat}
where we have used again that
$$
N_{\vp_x+ \psi_x}<O(R^2)\text { and } N_{\Phi_\omega} < O(R^2).
$$

Next
$$
\begin{aligned}
|(\cup B_\alpha')^c|&=\sum_\alpha |B_\alpha\backslash B_\alpha'|\\
&+|\{\omega; \max_{1\leq k\leq K}; |g_k(\omega)|>M\}|\\
&\overset{(3.13)}< \frac {\ve_2}{R^2}\sum |B_\alpha|+ \frac {\ve_2}{R^2}< 2\frac {\ve_2}{R^2}
\end{aligned}
$$
and
$$
|(\cup A_\alpha')^c| =\sum_\alpha |A_\alpha\backslash A_\alpha'|+ |\{x\in \mathbb T^2; \max_{1\leq k\leq K} |c_k(x)|>M\}|.
$$
Again by \eqref{3.13}
$$
\sum_\alpha |A_\alpha\backslash A_\alpha'|\leq \Big(1+\frac{\ve_2}{2R^2}\Big) \sum|B_\alpha|-\sum|B_\alpha'|<\frac{\ve_2}{R^2}.
$$

From Lemma \ref{lemma2}
$$
\begin{aligned}
|\{x\in \mathbb T^2; \max_{1\leq k\leq K} |c_k (x)|>M\}|&\leq \sum^K_{k=1} \text{ mes}[|c_k|>M]\\
&<K(\ve+\text { mes}[|g_k|>M])< \frac {\ve_2}{R^2}.
\end{aligned}
$$
Substituting in \eqref{3.15} gives
\be\label{3.16}
\int N_{\Phi_\omega +\Psi_\omega} d\omega =\int_{\mathbb T^2} N_{\vp_x+\psi_x} dx+O(\ve_2).
\ee
Finally, note that on $B_\alpha'$, by \eqref{3.14} and choice of $\ve_4$
$$
\Vert\Psi_\omega\Vert_{C^1} \leq R\sqrt K\ve_4 +\Vert\psi_{\tau_\alpha(\omega)}\Vert_{C^1}< 2\ve_3.
$$
Also, since either $\Psi_\omega=0$ or $\Phi_\omega+\Psi_\omega=\vp_x+\psi_x$ 
for some $x$, it follows that $-\Delta(\Phi_\omega+\Psi_\omega)=
R^2(\Phi_\omega+\Psi_\omega)$.
This proves the claim. \hfill $\square$

\medskip

At this stage, we are reduced to study the expected number of nodal domains in $]-\frac 12, \frac 12[\times]-\frac 12, \frac 12[$ of the
perturbed Gaussian vector $\Phi_\omega$.

We make use of the work of Nazarov--Sodin and more specifically, several results from \cite {S}.

First there is the stability issue.
Considering the random Gaussian function $\Phi_\omega$ given by \eqref{3.8}, clearly
\be\label{3.17}
\mathbb E_\omega[\Vert\Phi_\omega\Vert_{C^2}] < O(R^2).
\ee
Invoking Lemma 5 from \cite {S}, which is based on the independence of $\Phi_\omega$ and $\nabla\Phi_\omega$, we get some $\beta=\beta(R, \ve_2)>0$
such that
\be\label{3.18}
\min_{x\in[-\frac 12, \frac 12]^2} \max (|\Phi_\omega(x)|, |\nabla\Phi_\omega(x)|)>\beta
\ee
for all $\omega$ outside a set of measure less than $\frac {\ve_2}{R^2}$, hence contributing to
$$
\mathbb E[N_{\Phi_\omega+\Psi_\omega}]
$$
for at most $O(\ve_2)$.

Property \eqref{3.18} is crucial to derive a stability property for the number of nodal domains under perturbation (see \cite {S}, Lemma 6).
Recall that the perturbation $\Psi_\omega$ satisfies $\Vert\Psi_\omega\Vert_{C^1}<\ve_3$.
Taking
\be\label{3.19}
\ve_3 =\beta(R, \ve_2)\frac 1{10R}. 
\ee
Lemma 7 from \cite {S} applied with $f=\Phi_\omega, g=\Psi_\omega$ and $\alpha =2\ve_3$ implies in particular the following
\begin{alignat}{1}\label{3.20}
N_{\Phi_\omega +\Psi_\omega} \geq & \text { number of components of $Z(\Phi_\omega)$ contained in the square}\notag\\
& Q=\Big[-\frac 12, \frac 12\Big]^2 \text { and at distance at least 
$\frac 1{2R}> \frac {2\alpha}\beta$ from $\partial Q$}.
\end{alignat}
Note that since $\Vert\Psi_\omega\Vert_{C^1}<\ve_3$, \eqref{3.18} also implies that
\be\label{3.21}
\min_{x\in Q} \max\big(|(\Phi_\omega+\Psi_\omega)(x)|, |\nabla(\Phi_\omega +\Psi_\omega)(x)|\big)>\frac \beta 2.
\ee
Another application of \cite {S}, Lemma 7 taking $f=\Phi_\omega+\Psi_\omega, g=-\Psi_\omega$ yields conversely that
\be\label{3.22}
\text {Number of components of $Z(\Phi_\omega)$ contained in $\Big[-\frac 12-\frac 1{2R}, \frac 12+\frac 1{2R}\Big]^2\geq N_{\Phi_\omega+\Psi_\omega}$}.
\ee
It follows from the two-sided inequalities \eqref{3.20}, \eqref{3.22} that
\begin{alignat}{1}\label {3.23}
&|\mathbb E[N_{\Phi_\omega}] -\mathbb E[N_{\Phi_\omega+\psi_\omega}|<\notag\\
&\mathbb E\Big[\# \text { components $C$ of $Z(\phi_\omega)$ contained in $\Big[-\frac 12- \frac 1{2R}, \frac 12+ \frac 1{2R}\Big]^2$ for which}\notag\\ 
&\text{dist} (C, \partial Q)<\frac 1R\Big]\\
&+O(\ve_2).\notag
\end{alignat}

Recalling \eqref{3.8} and $\zeta_k =\frac R\lambda \xi_k$ with $\xi_k \in \mathbb Z^2$,
$|\xi_k|=\lambda$, we have
$$
N_{\Phi_\omega} = N\Big(0, \frac R\lambda, f_\omega\Big)
$$
defining
\be\label{3.24}
f_\omega (x) =\frac 1{\sqrt K}\sum^K_{k=1} g_k(\omega) e(\xi_k. x)
\ee
and $N\big(0, \frac R\lambda; f_\omega\big)$ the number of components of $Z(f_\omega)$ contained in \hfill\break
$Q_R =]-\frac R{2\lambda}, \frac R{2\lambda}[\times]-\frac
R{2\lambda}, \frac R{2\lambda}[$.
Thus $f_\omega$ is a Gaussian random eigenfunction of $\mathbb T^2$ of eigenvalue $E$.

The first term in \eqref{3.23} accounts for the number of components $C$ of $f_\omega$ contained in $Q_{R+1}$ and such that 
dist$(C, \partial Q_R)<\frac 1\lambda$.
Each of these components has area at least $O(\frac 1{\lambda^2})$ and it follows from the Kac-Rice formula that
\be\label{3.25}
\mathbb E_\omega[\text{length $(Z_{f_\omega} \cap Q_{R+1})]< O\Big(\frac {R^2}\lambda\Big)$}.
\ee
From these facts, one easily derives that
\be\label{3.26}
\mathbb E\Big[\#\text { components $C$ of $Z(f_\omega)$ s.t. $C\subset Q_{R+1}, \text{dist}(C, \partial Q_R)<\frac 1\lambda\Big]<O(R^{\frac 32}\log R)$}
\ee
(we first exclude those components $C$ of size at least $\log R.\frac{\sqrt R}{\lambda}$, assuming \hfill\break
length $(Z_{f_\omega}\cap Q_{R+1})<\log R.
\frac{R^2}\lambda$ and then exploit the area lower bound of each component).

Hence we proved that
\be\label{3.27}
\mathbb E[N_{\Phi_\omega+\Psi_\omega}]=\mathbb E\Big[N\Big(0, \frac R\lambda, f_\omega\Big)\Big]+O(\ve_2+R^{3/2}\log R).
\ee
The expectation of $N(0, \frac R\lambda, f_\omega)$ in the $\lim_{R\to\infty} \lim_{\lambda\to\infty}$ is given by Theorem 5 in \cite{S} and we get in our
situation
\be\label{3.28-1}
R^2. \nu(\rho) 
\ee
where $\nu(\rho)$ is the constant given by \cite {S}, Theorem 1 associated to the measure $\rho$, which is the limiting spectral measure of our sequence
\eqref{3.24}, in the sense of \cite{S}.
Thus one considers the spectral measure $\rho_\lambda$ associated to \eqref{3.24} defined by
\be\label{3.28}
\widehat{\rho_\lambda} (u-v)=\mathbb E\Big[f_\omega\Big(\frac u\lambda\Big). f_\omega \Big(\frac v\lambda\Big)\Big]
=\frac 1K \sum^K_{k=1} e\Big(\frac{\xi_k}\lambda .(u-v)\Big).
\ee
Hence
$$
\rho_\lambda=\frac 1K\sum^K_{k=1} \rho_{\xi_k\lambda^{-1}}
$$
where $\delta_z$ stands for the Dirac measure at $z\in\mathbb R^2$, $|z|=1$.

Since, by assumption (D) and the construction in Section 2, the measures $\rho_\lambda$ become equidistributed for $\lambda, K\to\infty$, the limiting measure
$\rho$ is he normalized Lebesque measure on the unit circle and $\bar\nu$ is the constant associated to the RWM discussed in the Introduction; i.e.
$\sigma=4\pi\bar\nu$.
Recall \eqref{3.16}, \eqref{3.27} and take say $\ve_2 =R^{-\frac 1{10}}$.
From the preceding, we obtain
$$
\int_{\mathbb T^2} N_{\vp_x+\psi_x} dx =\big(\bar\nu +o(1)\big) R^2
$$
and
\be\label {3.29}
\int_{\mathbb T^2} N_{F_x} dx= \big(\bar\nu+o(1)\big)R^2.
\ee
In view of \eqref{3.1}, \eqref{3.2}, we obtain finally from the choice of $\ve_2$, that
$$
N_f =\big(\bar \nu+o(1)\big) \lambda^2.
$$
Recapitulating the preceding, the order in which the various parameters are chosen is
$$
R, \ve_2, \beta, \ve_3, K, M, \ve_4, \ve, \ve_1, B(K, \ve).
$$
We proved the following

\begin{proposition}\label{proposition1}

Assume $E$ taken in a sequence such that (D) holds for $E\to \infty$ and also, for some fixed $\gamma<\frac 12$, 
condition $I\big(\gamma, B(E)\big)$ with $B(E)\overset {E\to\infty}\longrightarrow \infty$.
Let
$$
f_E=\sum_{\xi\in\mathcal E_E} e(x.\xi)
$$
or, more generally
$$
f_E=\sum_{\xi \in\mathcal E_E} a_\xi e(x.\xi) \text { with } a_{-\xi}=\bar a_\xi, |a_\xi|=1.
$$
Then the number $N_E$ of nodal domains of $f_E$ satisfies
$$
4\pi \frac{N_E}E\to\sigma
$$
\end{proposition}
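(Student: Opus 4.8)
The plan is to assemble the machinery already developed in Sections 2 and 3: localize the global nodal count, approximate the localized eigenfunction by the trigonometric polynomial $\vp_x$, transfer the resulting ensemble $(\vp_x)_{x\in\mathbb T^2}$ to a genuine Gaussian random wave via the $\ve$-Gaussian property, and finally invoke the Nazarov--Sodin limit theorem, reading off the constant from assumption (D). The first move is the localization identity \eqref{3.1}. Since $Z(f)$ has total length $O(\lambda)$ and every nodal domain has area $\gtrsim\lambda^{-2}$ by Faber--Krahn, all but $O(\ve_2\lambda^2)$ domains have diameter $\lesssim\ve_2^{-1}\lambda^{-1}$; partitioning $\mathbb T^2$ into boxes of side $R/\lambda$ and counting domains box by box yields $N_f=\frac{\lambda^2}{R^2}\int_{\mathbb T^2}N_{F_x}\,dx$ up to a relative error $O(\ve_2+(R\ve_2)^{-1})$, with $R$ chosen large first and $\ve_2$ small afterwards.

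Next I would replace $F_x$ by $\vp_x$ using Lemma \ref{lemma1}: for $K$ large the two are $C^1$-close (below $\ve_3$) off a set $V$ of measure $<\ve_2$, and on $V$ the crude bound $N<O(R^2)$ (Faber--Krahn again, since $-\Delta\vp_x=R^2\vp_x$) makes the discarded contribution $O(\ve_2\lambda^2)$. This reduces matters to $\frac{\lambda^2}{R^2}\int N_{\vp_x+\psi_x}\,dx$ with $\Vert\psi_x\Vert_{C^1}<\ve_3$. The heart of the argument is then the passage to the Gaussian model via Lemma \ref{lemma2}, which is precisely where the arithmetical hypothesis $I(\gamma,B)$ enters, forcing the coefficient vector $(c_1,\dots,c_K)$ to be $\ve$-Gaussian once $B=B(K,\ve)$ is large. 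I would discretize $\mathbb C^K$ into cubes $Q_\alpha$ of side $\ve_4=\ve_3/(R\sqrt K)$ inside an $M$-ball with $M\sim\sqrt{\log K}\sqrt{\log(R/\ve_2)}$, match the masses $|A_\alpha|$ and $|B_\alpha|$ up to $\ve$, and — using that each $|B_\alpha|>\delta(K,M,\ve_4)$ is bounded below — choose $\ve$ small enough that the discrepancies form a tiny relative fraction. A measure-preserving coupling $\tau_\alpha$ on equal-measure subsets then produces a perturbation $\Psi_\omega$ with $\Vert\Psi_\omega\Vert_{C^1}<2\ve_3$ and $-\Delta(\Phi_\omega+\Psi_\omega)=R^2(\Phi_\omega+\Psi_\omega)$, such that $\int N_{\Phi_\omega+\Psi_\omega}\,d\omega=\int_{\mathbb T^2}N_{\vp_x+\psi_x}\,dx+O(\ve_2)$.

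Finally I would remove the perturbation and take the limit. The Nazarov--Sodin stability package from \cite{S} (Lemmas 5, 6, 7) applies because $\Phi_\omega$ and $\nabla\Phi_\omega$ are independent: off an event of probability $<\ve_2/R^2$ one has the quantitative nondegeneracy \eqref{3.18}, and taking $\ve_3=\beta/(10R)$ a two-sided sandwich bounds $|\mathbb E\,N_{\Phi_\omega}-\mathbb E\,N_{\Phi_\omega+\Psi_\omega}|$ by the expected number of boundary-hugging components, which the Kac--Rice length estimate \eqref{3.25} controls by $O(R^{3/2}\log R)$. Theorem 5 of \cite{S} then gives $\mathbb E\,N_{\Phi_\omega}=(\nu(\rho)+o(1))R^2$ in the iterated limit $\lambda\to\infty$ then $R\to\infty$, where $\rho$ is the limiting spectral measure of \eqref{3.24}. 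Here assumption (D) is decisive: it forces $\rho$ to be normalized Lebesgue measure on $S^1$, identifying $\nu(\rho)=\bar\nu$ with the RWM constant $\sigma=4\pi\bar\nu$. Choosing $\ve_2=R^{-1/10}$ collects all error terms into $o(1)R^2$ and, back through the localization identity, into $N_f=(\bar\nu+o(1))\lambda^2$, which is the claim.

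The part I expect to be the main obstacle is the coupling in the third paragraph: promoting a merely finite, $K$-dependent $\ve$-Gaussian approximation into a genuine measure-preserving map whose induced $C^1$ perturbation stays below the stability threshold $\beta(R,\ve_2)$, while the accumulated boundary-component and exceptional-set errors remain $o(R^2)$. Making these estimates close requires selecting the parameters in exactly the order $R,\ve_2,\beta,\ve_3,K,M,\ve_4,\ve,\ve_1,B(K,\ve)$, since each is calibrated against the quantities chosen before it; any reshuffling breaks the chain of smallness that renders every error term negligible against the main term $\bar\nu\lambda^2$.
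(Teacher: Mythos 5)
Your proposal follows the paper's own proof essentially step for step: the localization identity \eqref{3.1}, the replacement of $F_x$ by $\vp_x$ via Lemma \ref{lemma1}, the cube-discretization and measure-preserving coupling that converts the $\ve$-Gaussian property of Lemma \ref{lemma2} into the perturbed Gaussian ensemble $\Phi_\omega+\Psi_\omega$, the Nazarov--Sodin stability lemmas with the Kac--Rice boundary-component estimate, and the identification of the limiting spectral measure via (D), with the parameters chosen in the same order. The argument is correct and is the same as the one in the paper.
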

\medskip

\section
{Arithmetic considerations}

We return to the assumptions (D) of equidistribution and (I) of independence.
Recall that we assumed $E$ of the form
\be\label{4.1}
E=\prod p_\alpha^{e_\alpha}
\ee
with $p_\alpha$ odd, $p_\alpha\equiv 1(\text{mod\,} 4)$.

Let $\pi _\alpha =|\pi_\alpha| e^{i\theta_\alpha}$ and write $\xi =\lambda e^{i\psi_\xi}$ for $\xi\in\mathcal E=\mathcal E_E$,
according to \eqref{1.6}, \eqref{1.10}.

We start with a statistical discussion, considering a `typical' integer $E$ of the above form.

A quantitative form of the required angular equidistribution is established in \cite {E-H} (see Theorem 1).

\begin{lemma}\label{lemma3}

Given $\ve>0$, for almost all integers $E$ considered above, one has a discrepancy bound
\be\label{4.2}
\Delta(E) < |\mathcal E_E|(\log E)^{-\kappa+\ve} \text { where } \kappa =\frac 12\log\frac\pi 2.
\ee
\end{lemma}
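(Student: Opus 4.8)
The plan is to derive the discrepancy bound \eqref{4.2} from the quantitative equidistribution of the angles of Gaussian integers furnished by Theorem~1 of \cite{E-H}, after first reformulating $\Delta(E)$ Fourier-analytically and isolating the relevant arithmetic averages. I introduce the empirical measure $\mu_E=|\mathcal E_E|^{-1}\sum_{\xi\in\mathcal E_E}\delta_{\xi/\lambda}$ on $S^1$; since $\mathcal E_E$ is invariant under multiplication by $\pm1,\pm i$, its Fourier coefficients $\widehat{\mu_E}(m)=|\mathcal E_E|^{-1}\sum_\xi e^{im\psi_\xi}$ vanish unless $4\mid m$. By the Erd\H os--Tur\'an inequality,
\be
\frac{\Delta(E)}{|\mathcal E_E|}\ll\frac1H+\sum_{1\le n\le H}\frac1n\,\big|\widehat{\mu_E}(4n)\big|,
\ee
so it suffices to bound the exponential sums $\widehat{\mu_E}(4n)$ for a typical $E$, uniformly over $n\le H$, and then optimise the cutoff $H$.

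The explicit description \eqref{1.6}, \eqref{1.10} of $\mathcal E_E$ makes these sums completely multiplicative. Substituting $\psi_\xi=\sum_\alpha(2j_\alpha-e_\alpha)\theta_\alpha$ factors
\be
\big|\widehat{\mu_E}(4n)\big|=\prod_\alpha\frac1{e_\alpha+1}\Big|\sum_{j=0}^{e_\alpha}e^{\,4in(2j-e_\alpha)\theta_\alpha}\Big|,
\ee
a product of normalised Dirichlet kernels evaluated at the Gaussian-prime angles $\theta_\alpha$. For a simple prime factor ($e_\alpha=1$) the corresponding factor is exactly $|\cos(4n\theta_\alpha)|$; since the mean value of the normalised kernel only decreases as $e_\alpha$ grows, the simple factors govern the size, and the boundedly many higher prime powers of a typical $E$ contribute a harmless $O(1)$ factor.

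Two arithmetic facts now produce the exponent. First, for integers all of whose prime divisors are $\equiv1\ (\mathrm{mod}\ 4)$, the normal order of the number of prime factors is $\tfrac12\log\log E$, the $\tfrac12$ reflecting the relative density of such primes. Restricting to this density-one family and using the joint, effective equidistribution of the angles $\theta_\alpha$, the average of $|\widehat{\mu_E}(4n)|$ over the admissible angle configurations factorises into a product of the single mean value $\frac1{2\pi}\int_0^{2\pi}|\cos t|\,dt=\frac2\pi$, one factor per simple prime divisor, giving
\be
\Big(\frac2\pi\Big)^{\omega(E)}=\Big(\frac2\pi\Big)^{\frac12\log\log E}=(\log E)^{-\frac12\log(\pi/2)}=(\log E)^{-\kappa}.
\ee
A first-moment (Markov) bound then shows, for each fixed $n$, that $|\widehat{\mu_E}(4n)|<(\log E)^{-\kappa+\ve}$ outside a set of $E$ of vanishing density, and the $\ve$ is meant to absorb the logarithmic losses from the summation over $n$ and the optimisation of $H$.

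The main obstacle is the passage from this fixed-$n$, first-moment heuristic to a bound valid \emph{uniformly} over all frequencies $n\le H$ for almost every $E$, with the sharp constant $\kappa=\tfrac12\log\frac\pi2$. Controlling $\max_{n\le H}|\widehat{\mu_E}(4n)|$ requires higher-moment or large-deviation estimates for the products of $|\cos(4n\theta_\alpha)|$, uniform joint equidistribution of the angle vectors $(\theta_\alpha)_\alpha$ as $E$ ranges over the family, and a verification that the exceptional set (including $E$ with anomalous $\omega(E)$ or anomalously large prime powers) has density zero. This delicate analysis, together with the optimality of the exponent, is exactly the content of Theorem~1 of \cite{E-H}, and Lemma~\ref{lemma3} is its translation to the discrepancy $\Delta(E)$.
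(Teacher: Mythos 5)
Your proposal matches the paper: Lemma \ref{lemma3} is established there simply by citing Theorem 1 of \cite{E-H} (with the remark that its proof rests on Kubilius' count of Gaussian primes in sectors and on bounds for multiplicative functions), and your argument --- after the Erd\H{o}s--Tur\'an reduction and the multiplicative factorisation over the prime divisors, which correctly explain the origin of the constant via $(2/\pi)^{\frac12\log\log E}=(\log E)^{-\kappa}$ --- ultimately defers the hard uniform-in-$n$ estimate to that same theorem. So the route is essentially the paper's, with a sound and useful expository derivation of the exponent added to what is otherwise a bare citation.
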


Here $\Delta(E)$ is defined as
\be\label{4.3}
\max_{0\leq\alpha<\beta< 2\pi}
\Big|\frac{\beta-\alpha}{2\pi} |\mathcal E_E|-[\#\{\xi\in\mathcal E; 
\psi_\xi \in [\alpha, \beta](\text{mod\,} 2\pi)]\Big|.
\ee
The proof of this result depends on  Kubilius' evaluation of the number of Gaussian primes in a sector and bounds on
multiplicative functions.

Let us also recall that, on average, an integer $E$ that is sum of 2 squares has $\asymp \frac 12\log\log E$ prime factors, 
implying that  $|\mathcal E_E|\sim \sqrt{\log E}$.

Next, we discuss the independence condition, again from  a statistical perspective.
The following statement follows from \cite{B-B}, Theorem 17 and Remark 15.

\begin{lemma}\label{lemma4}
Given $\ell>2$, for most integers $E$ of type \eqref  {4.1}, the number of non-degenerate relations $\xi_1+\cdots+\xi_\ell =0$ among
elements of $\mathcal E_E$ is at most $O(|\mathcal E_E|)$ for $E\to\infty$.
More precisely, given any function $\vp$, $\frac{\vp(u)}u \overset {u\to\infty}\rightarrow \infty$, for most $E$, the
number of non-degenerate solutions is bounded by $\vp(|\mathcal E_E|)$.
\end{lemma}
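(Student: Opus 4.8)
The plan is to reduce the count of non-degenerate relations to a problem about $S$-unit equations and then to pass from the (too weak) worst-case bound to the averaged bound of \cite{B-B}. First I would record the multiplicative structure already exhibited in \eqref{1.6}: every $\xi\in\mathcal E_E$ is, up to a unit, a monomial $\prod_\alpha \pi_\alpha^{j_\alpha}\bar\pi_\alpha^{e_\alpha-j_\alpha}$ in the Gaussian primes dividing $E$. Hence, given a relation $\xi_1+\cdots+\xi_\ell=0$, dividing through by $\xi_\ell$ turns it into
\be
\frac{\xi_1}{\xi_\ell}+\cdots+\frac{\xi_{\ell-1}}{\xi_\ell}=-1,
\ee
where each ratio $\xi_i/\xi_\ell$ is an $S$-unit of $\mathbb Z[i]$, with $S$ the set of Gaussian primes above the rational primes $p_\alpha\mid E$. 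Non-degeneracy of the original relation is exactly the condition that no proper subsum on the left vanishes, so $(\xi_1/\xi_\ell,\dots,\xi_{\ell-1}/\xi_\ell)$ is a non-degenerate solution of this $S$-unit equation.

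By \cite{E-S-S} the number of such solutions is bounded by a quantity of the form $\exp\!\big(C(\ell)\,\omega(E)\big)$, where $\omega(E)$ is the number of prime factors of $E$ and governs the rank of the relevant $S$-unit group. This pointwise bound is, however, insufficient: for a typical $E$ one has $\omega(E)\sim\frac12\log\log E$ and $|\mathcal E_E|\sim\sqrt{\log E}$, so $\exp\!\big(C(\ell)\,\omega(E)\big)$ is a power of $\log E$ with a large (and $\ell$-dependent) exponent, far exceeding the target $O(|\mathcal E_E|)$. I would therefore not try to bound the relation count $r_\ell(E)$ for every $E$, but instead control it on average, which is precisely the content of \cite{B-B}, Theorem 17 and Remark 15.

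Concretely, for fixed $\ell$ I would invoke the averaged estimate of \cite{B-B} for $\sum_{E\le X} r_\ell(E)$ over admissible $E$ of the form \eqref{4.1}. This estimate, which exploits the multiplicative parametrization above together with the distribution of $\omega(E)$ and a subspace/unit-equation analysis, shows that the total number of relations is, up to a slowly growing factor, comparable to $\sum_{E\le X}|\mathcal E_E|$. A first-moment (Markov) argument then yields that, for any $\varphi$ with $\varphi(u)/u\to\infty$, the proportion of admissible $E\le X$ with $r_\ell(E)>\varphi(|\mathcal E_E|)$ tends to zero; this is the assertion for fixed $\ell$, and a union over the finitely many $2<\ell\le B$ transfers it to property $I(\gamma,B)$ for most $E$.

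The main obstacle is exactly the averaging step that must beat the exponential-in-$\omega(E)$ worst case: the $E$ with anomalously many prime factors are simultaneously those with large $|\mathcal E_E|$ and potentially many relations, so one cannot simply discard them, and the saving has to come from showing that such relations are rare across $E$ rather than from bounding them for each individual $E$. Reconstructing this saving is the heart of \cite{B-B}; granting it, the reduction to $S$-unit equations and the first-moment argument are routine.
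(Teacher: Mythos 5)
Your proposal is correct and matches the paper's treatment: the paper gives no independent proof of Lemma \ref{lemma4} but simply derives it from \cite{B-B}, Theorem 17 and Remark 15, exactly the averaged first-moment input you invoke. Your additional framing (the $S$-unit reduction and the observation that the pointwise \cite{E-S-S} bound $\exp(C(\ell)\omega(E))$ is too weak for typical $E$) is consistent with the paper's own discussion around Lemma \ref{lemma5} and \eqref{4.10}.
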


Obviously, this implies property $I(\gamma, B)$ for any given $\gamma>\frac 13$, for typical $E$ taken large enough.

Note that \cite{B-B}, Theorem 17 follows from the following statement, which in some sense is stronger.

Denote
\be\label{4.4}
\Omega_{X, K} =\big\{E=\prod p_\alpha <X; p_\alpha\equiv 1 (\text{mod\,} 4), p_\alpha>K\big\}.
\ee 
This set satisfies
\be\label{4.5}
|\Omega_{X, K}|\sim \frac X{\sqrt{\log X}\sqrt{\log K}}.
\ee
Theorem 14 in \cite {B-B} asserts then that for fixed $\ell$,
\be\label{4.6}
\lim_{K\to\infty}\lim_{X\to\infty} \frac 1{|\Omega_{X, K}|}|\{E\in\Omega_{X, K}; \text { $\mathcal E_E$
admits a nondegenerate relation of 
length } \ell\}|=0.
\ee

Lemmas \ref{lemma3} and \ref{lemma4} are clearly addressing the assumptions from our theorem in the statistical sense.
Thus we can state

\begin{theorem}\label{theorem2}
The conclusion from Proposition \ref{proposition1} holds for almost all $E\to\infty$ of the form \eqref{1.4}.
\end{theorem}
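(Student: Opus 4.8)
The plan is to read Theorem \ref{theorem2} as a purely logical consequence of Proposition \ref{proposition1}, once the two hypotheses (D) and (I) of that proposition are supplied, for a density-one set of $E$ of the form \eqref{1.4}, by the arithmetic inputs of Lemmas \ref{lemma3} and \ref{lemma4}. Accordingly I would fix once and for all a value $\gamma$ with $\tfrac13<\gamma<\tfrac12$, so that Proposition \ref{proposition1} becomes applicable as soon as one can produce a function $B(E)\to\infty$ for which $I(\gamma,B(E))$ holds. The only genuine work is to convert the $\ell$-by-$\ell$ statement of Lemma \ref{lemma4} into a single growing threshold $B(E)$; everything else is bookkeeping about intersections of full-density sets.

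First I would dispatch condition (D). By Lemma \ref{lemma3}, for almost all admissible $E$ one has $\Delta(E)<|\mathcal E_E|(\log E)^{-\kappa+\ve}$ with $\kappa=\tfrac12\log\tfrac\pi2>0$; choosing $\ve<\kappa$ the normalized discrepancy $\Delta(E)/|\mathcal E_E|$ tends to $0$, which is exactly the equidistribution of $\{\lambda^{-1}\xi\}_{\xi\in\mathcal E_E}$ on $S^1$ demanded in (D). Along the way I would record that $|\mathcal E_E|=W\to\infty$ for almost all such $E$ (the number of prime factors tends to infinity, and $W\geq 4\cdot 2^{\omega(E)}$), which is needed for the entire Section 2 apparatus to apply. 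Call $\mathcal A$ the resulting density-one set on which (D) and $W\to\infty$ both hold.

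The hard part is (I). For each fixed integer $\ell\geq 3$ set
\be
S_\ell=\big\{E\text{ of the form }\eqref{1.4}:\ \#\{\text{non-degenerate }\xi_1+\cdots+\xi_\ell=0\text{ in }\mathcal E_E\}>|\mathcal E_E|^{\gamma\ell}\big\}.
\ee
Since $\gamma>\tfrac13$ forces $\gamma\ell>1$ for every $\ell\geq 3$, the bound $O(|\mathcal E_E|)$ of Lemma \ref{lemma4} is eventually smaller than $|\mathcal E_E|^{\gamma\ell}$ on $\mathcal A$, so each $S_\ell$ has natural density zero. I would then run the standard diagonalization: writing $d_\ell(X)$ for the density of $S_\ell$ in $[1,X]$, pick an increasing sequence $Y_3<Y_4<\cdots\to\infty$ with $d_\ell(X)<2^{-\ell}$ for all $X\geq Y_\ell$, and define $B(E)=\max\{\ell:\ Y_\ell\leq E\}$, so that $B(E)\to\infty$ as slowly as needed. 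To see that the bad set $\mathcal B=\{E:\ E\in S_\ell\text{ for some }\ell\leq B(E)\}$ still has density zero, fix $\delta>0$, choose $L_0$ with $\sum_{\ell>L_0}2^{-\ell}<\delta/2$, and for $X$ large split, using the monotonicity of $B$,
\be
\sum_{\ell=3}^{B(X)}d_\ell(X)=\sum_{\ell=3}^{L_0}d_\ell(X)+\sum_{\ell=L_0+1}^{B(X)}d_\ell(X)<\tfrac\delta2+\sum_{\ell>L_0}2^{-\ell}<\delta,
\ee
where the finitely many terms with $\ell\leq L_0$ each tend to $0$ as $X\to\infty$, while each term with $L_0<\ell\leq B(X)$ satisfies $X\geq Y_\ell$ and hence $d_\ell(X)<2^{-\ell}$. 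This is the step I expect to require the most care, since one must arrange $B(E)\to\infty$ and density-one control simultaneously: a uniform per-$\ell$ bound alone yields only the convergent constant $\sum 2^{-\ell}$, so the splitting at $L_0$ is essential. On $\mathcal A\setminus\mathcal B$, by construction $I(\gamma,B(E))$ holds with $B(E)\to\infty$, i.e. condition (I) is met.

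Finally I would intersect: $\mathcal A\setminus\mathcal B$ has density one, and on it (D) holds together with $I(\gamma,B(E))$ for the fixed $\gamma<\tfrac12$ and $B(E)\to\infty$, so Proposition \ref{proposition1} applies to every $E$ in this set and yields $4\pi N_E/E\to\sigma$. Since $\mathcal A\setminus\mathcal B$ is precisely the ``almost all $E$ of the form \eqref{1.4}'' in the statement, this proves Theorem \ref{theorem2}. The passage from the unweighted eigenfunction to the weighted one with $|a_\xi|=1$ needs nothing further, being already part of Proposition \ref{proposition1}.
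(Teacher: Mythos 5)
Your proposal is correct and follows essentially the same route as the paper: condition (D) is supplied for almost all admissible $E$ by Lemma \ref{lemma3}, condition $I(\gamma,B(E))$ with $\gamma\in(\tfrac13,\tfrac12)$ and $B(E)\to\infty$ by Lemma \ref{lemma4}, and Proposition \ref{proposition1} is then applied on the intersection of the resulting density-one sets. The only difference is that you spell out the diagonalization producing a single growing threshold $B(E)$ from the per-$\ell$ statements, a step the paper leaves implicit in the remark that Lemma \ref{lemma4} ``obviously'' implies $I(\gamma,B)$ for typical $E$ large enough.
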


Our next goal is a deterministic implementation.
We start with the independence assumption.
In certain cases, the desired information is provided by the deep work of 
Evertse-Schlickewei-Schmidt on additive relations in
multiplicative subgroups of $\mathbb C^*$ of bounded rank \cite {E-S-S}, which in turn depends on the subspace theorem.
The result of \cite {E-S-S} states that any unit equation
\be\label{4.7}
a_1g_1+\cdots+ a_\ell g_\ell =1
\ee
with $g_1, \ldots, g_\ell$ taken in a multiplicative
group $G$ over $\mathbb C$ of $\mathbb Q$-rank $r$, has at most exp $\big(c(\ell)(r+1)\big)$
non-degenerate solutions.  Here $c(\ell)$ may be taken
\be\label{4.8}
c(\ell)=(4\ell)^{3\ell}.
\ee
An immediate consequence is the following

\begin{lemma}\label{lemma5}
Let $E=\prod^r_{\alpha=1} p_\alpha^{e_\alpha}$ be as above.
Then the number of non-degenerate relations \eqref{2.12} among elements from $\mathcal E_E$ is 
bounded  by
\be\label{4.9}
\exp \big(c(\ell-1)(2r+1)\big) |\mathcal E_E|.
\ee
\end{lemma}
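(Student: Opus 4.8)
The plan is to convert a non-degenerate additive relation among the $\xi_i\in\mathcal E_E$ into a unit equation of the shape \eqref{4.7} and then quote the Evertse--Schlickewei--Schmidt bound \eqref{4.7}--\eqref{4.8} verbatim. First I would fix a non-degenerate relation $\xi_1+\cdots+\xi_\ell=0$ with all $\xi_i\in\mathcal E_E$ and divide through by the nonzero element $-\xi_\ell$, obtaining
\[
g_1+\cdots+g_{\ell-1}=1,\qquad g_i=-\frac{\xi_i}{\xi_\ell}\quad(1\le i\le\ell-1).
\]
The key point is that each $g_i$ lies in a fixed multiplicative group of controlled rank. Indeed, by \eqref{1.6} every element of $\mathcal E_E$ is, up to a unit in $\{\pm1,\pm i\}$, a monomial $\prod_\alpha \pi_\alpha^{j_\alpha}\bar\pi_\alpha^{e_\alpha-j_\alpha}$ in the Gaussian primes $\pi_\alpha,\bar\pi_\alpha$. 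Hence both $\xi_i$ and $\xi_\ell$ lie in the group $G$ generated by $i,\pi_1,\bar\pi_1,\ldots,\pi_r,\bar\pi_r$, and since $G$ is a group (with $-1=i^2\in G$) so does each ratio $g_i$. As the $2r$ Gaussian primes are multiplicatively independent and $i$ generates the torsion, $G$ has $\mathbb Q$-rank $2r$.

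Next I would apply the $\cite{E-S-S}$ bound with $a_1=\cdots=a_{\ell-1}=1$, with $\ell$ replaced by $\ell-1$ and $r$ replaced by $2r$, which caps the number of non-degenerate solutions $(g_1,\ldots,g_{\ell-1})\in G^{\ell-1}$ at $\exp\big(c(\ell-1)(2r+1)\big)$. I would then check that the non-degeneracy condition transfers correctly: if a proper subsum $\sum_{i\in I}g_i$ vanished for some $\emptyset\ne I\subsetneq\{1,\ldots,\ell-1\}$, multiplying by $-\xi_\ell$ would produce a proper vanishing subsum $\sum_{i\in I}\xi_i=0$ of the original relation, contrary to assumption. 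Thus each non-degenerate additive relation gives rise to a non-degenerate solution of the unit equation.

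Finally I would recover the count. The tuple $(\xi_1,\ldots,\xi_\ell)$ is determined by the solution $(g_1,\ldots,g_{\ell-1})$ together with the choice of normalizing element $\xi_\ell\in\mathcal E_E$, via $\xi_i=-g_i\xi_\ell$, and this assignment is injective; summing over the at most $|\mathcal E_E|$ choices of $\xi_\ell$ yields exactly the bound \eqref{4.9}. Since \cite{E-S-S} does all the genuine work, there is no serious obstacle here: the only points requiring care are the correct identification of the ambient group and its rank, and the non-degeneracy dictionary above. I would note in passing that one could sharpen $2r$ to $r$ by working instead with the rank-$r$ group generated by $i$ and the quotients $\pi_\alpha/\bar\pi_\alpha$ (using that $\pi_\alpha\bar\pi_\alpha=p_\alpha$ is fixed on $\mathcal E_E$, so the exponents of $\pi_\alpha$ and $\bar\pi_\alpha$ in each $g_i$ are opposite), but the stated bound already suffices for the application, where only $B(E)\to\infty$ matters.
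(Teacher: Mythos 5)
Your proof is correct and is precisely the deduction the paper leaves implicit when it calls Lemma \ref{lemma5} ``an immediate consequence'' of \eqref{4.7}--\eqref{4.8}: normalize by $-\xi_\ell$ to get a unit equation in $\ell-1$ terms over the rank-$2r$ group generated by $i$ and the Gaussian primes $\pi_\alpha,\bar\pi_\alpha$, check non-degeneracy transfers, and multiply by the $|\mathcal E_E|$ choices of $\xi_\ell$, which reproduces the constants $c(\ell-1)$ and $2r+1$ in \eqref{4.9} exactly. Your closing observation that one could work with the rank-$r$ group generated by $i$ and the ratios $\pi_\alpha/\bar\pi_\alpha$ is a valid sharpening, but, as you note, immaterial for the application.
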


While estimate \eqref {4.9} does not suffice in general to conclude a condition $I(\gamma, B)$,
it does suffice provided $r=o(\log|\mathcal E_E|)$, i.e., recalling \eqref{1.7}
\be\label{4.10}
\frac 1r \sum^r_{\alpha =1} \log e_\alpha \to \infty.
\ee
This is in particular the case if we fix the prime factors $p_1, \ldots, p_r$ of $E$
and take their exponents $e_\alpha$ large enough.

\begin{remark}\label{remark1}
It has been suggested that the true upper bound for the number of non-degenerate solutions
of \eqref{4.7} may be subexponential in the rank, possibly bounded by
\be\label{4.11}
\exp\big(c(\ell) r^{\beta(\ell)}\big) \text { for some } \beta(\ell)<1.
\ee
If this were true, $I(\gamma, B)$ would hold with any $\gamma>\frac 13$, for all sufficiently 
large $E$.
\end{remark}

\begin{remark}\label {remark2}
If we fix $p_1, \ldots, p_r=1(\text{\rm mod\,} 4)$ and let $E=p_1^{e_1}\cdots p_r^{e_r}$ with $\max e_\alpha\to\infty$, condition (I) is certainly satisfied.
Condition (D) amounts by \eqref{1.10} to equidistribution of the angular set
\be\label{4.12}
\Big\{ 2\sum^r_{\alpha=1} j_\alpha \theta_\alpha; 0\leq j_\alpha\leq e_\alpha\Big\}
\quad (\text {mod\,} 1).
\ee
Hence $\theta_\alpha \not\in 2\pi \mathbb Q$.
This is the case, since otherwise $\cos 2b\theta_\alpha=1$ for some $b\in\mathbb Z_+$, implying that $\cos 2\theta_\alpha$ is an algebraic integer.
But since $tg \theta_\alpha =\frac {\xi_2}{\xi_1} \in\mathbb Q$, $\cos 2\theta_\alpha \in\mathbb Q$ and therefore
$\cos  2\theta_\alpha \in\mathbb Z$, $\theta_\alpha \in\frac\pi 4\mathbb Z$ (contradiction).
\end{remark}

Hence 

\begin{theorem}\label{theorem3}
Assume given $p_1, \ldots, p_r\equiv 1(\text{\rm mod\,} 4)$.
Then the conclusion from Proposition \ref{proposition1} holds when $E$ ranges in the set
$$
\{p_1^{e_1} \ldots p_r^{e_r}, e_1, \ldots, e_r\in\mathbb Z_+\}.
$$
\end{theorem}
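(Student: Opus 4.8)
The plan is to derive Theorem \ref{theorem3} directly from Proposition \ref{proposition1} by checking that both arithmetic hypotheses (D) and (I) hold along the family $E=p_1^{e_1}\cdots p_r^{e_r}$ as $E\to\infty$ — equivalently, since $r$ and the $p_\alpha$ are fixed, as $\max_\alpha e_\alpha\to\infty$. In particular $|\mathcal E_E|=4\prod_\alpha(1+e_\alpha)\to\infty$ along this family, so the standing assumption $W\to\infty$ is met.

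First I would dispose of the independence condition (I). Because $r$ is fixed, Lemma \ref{lemma5} bounds the number of non-degenerate relations of length $\ell$ among elements of $\mathcal E_E$ by $\exp\big(c(\ell-1)(2r+1)\big)\,|\mathcal E_E|$, whose prefactor depends only on $\ell$ and the fixed $r$. Fixing any $\gamma\in(\tfrac13,\tfrac12)$, one has $\gamma\ell-1>0$ for $\ell\geq 3$, so the bound $|\mathcal E_E|^{\gamma\ell}$ required in Definition \ref{definition1} dominates \eqref{4.9} as soon as $\exp\big(c(\ell-1)(2r+1)\big)\leq|\mathcal E_E|^{\gamma\ell-1}$, which for each fixed $\ell$ holds once $|\mathcal E_E|$ is large. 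Letting $B(E)$ be the largest $B$ for which this inequality holds simultaneously for all $3\le\ell\le B$ — the binding case being $\ell=B$, as $c(\ell-1)=(4(\ell-1))^{3(\ell-1)}$ grows super-exponentially — yields $B(E)\to\infty$ as $|\mathcal E_E|\to\infty$. Thus $I(\gamma,B(E))$ holds with $\gamma>\tfrac13$ fixed and $B(E)\to\infty$, exactly as Proposition \ref{proposition1} demands.

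The substantive part is the equidistribution condition (D). By \eqref{1.10} (see also \eqref{4.12}) it suffices to show that the angular multiset $\{2\sum_{\alpha=1}^r j_\alpha\theta_\alpha:0\le j_\alpha\le e_\alpha\}$, taken modulo $2\pi$ together with its rotations by multiples of $\tfrac\pi2$ coming from multiplication by $\pm1,\pm i$, equidistributes on $S^1$. Since rotation preserves equidistribution and a finite union of equidistributed families is equidistributed, I would drop the $\tfrac\pi2$-rotations and apply Weyl's criterion to the base multiset. The decisive feature is that, because $\mathcal E_E$ has the product structure \eqref{1.6}, for each fixed nonzero frequency $m$ the Weyl sum factorizes,
\[
\sum_{0\le j_\alpha\le e_\alpha} e^{\,im(2\sum_\alpha j_\alpha\theta_\alpha)}=\prod_{\alpha=1}^r\sum_{j=0}^{e_\alpha}\big(e^{\,2im\theta_\alpha}\big)^{j}.
\]
Each inner sum is geometric, hence bounded by $2/|1-e^{2im\theta_\alpha}|$ provided $m\theta_\alpha\notin\pi\mathbb Z$, and this is guaranteed by the irrationality of $\theta_\alpha/\pi$ established in Remark \ref{remark2} (via $\operatorname{tg}\theta_\alpha=\xi_2/\xi_1\in\mathbb Q$ together with $p_\alpha\equiv 1\,(\text{mod }4)$). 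For fixed $m$ this bound is a constant $C_{m,\alpha}$, so after normalizing by $\prod_\alpha(e_\alpha+1)=|\mathcal E_E|/4$ the sum is $\le\prod_\alpha\min\big(1,\,C_{m,\alpha}/(e_\alpha+1)\big)$, which tends to $0$ because at least one $e_\alpha\to\infty$. Weyl's criterion then gives (D).

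The main obstacle to watch is (D), but it turns out to be mild. One might worry that the $\theta_\alpha$ need to satisfy a joint (multiplicative) independence condition; however, the product/box shape of $\mathcal E_E$ means the same frequency $m$ multiplies every factor, so the Weyl sum splits completely and only the individual irrationalities from Remark \ref{remark2} are used — no joint condition appears. The one point requiring care is that the geometric-sum bound $2/|1-e^{2im\theta_\alpha}|$ depends on $m$ and blows up when $m\theta_\alpha$ approaches $\pi\mathbb Z$; but since Proposition \ref{proposition1} uses (D) only qualitatively, it suffices to fix $m$ first, and for fixed $m$ irrationality supplies a fixed positive gap $|1-e^{2im\theta_\alpha}|>0$. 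With (D) and (I) both verified, Proposition \ref{proposition1} yields $4\pi N_E/E\to\sigma$, completing the proof.
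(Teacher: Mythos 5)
Your proof is correct and follows essentially the same route as the paper: condition (I) via Lemma \ref{lemma5} with $r$ fixed (so the Evertse--Schlickewei--Schmidt prefactor is eventually dominated by $W^{\gamma\ell-1}$ for any fixed $\gamma>\tfrac13$), and condition (D) via the irrationality $\theta_\alpha\notin\pi\mathbb Q$ from Remark \ref{remark2}. Your explicit Weyl-sum factorization over the product structure of $\mathcal E_E$ simply fills in the step the paper leaves implicit, and it is carried out correctly.
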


\begin{remark}\label{remark3}
We may also state the following property, which results from \eqref{4.7}, \eqref{4.8} and an easy adaptation of the 
proof of Lemma\ref{lemma2}.
\end{remark}

\begin{theorem}\label{theorem4}

Fix $r$ and let $E$ range in a sequence of integers of the form
$$
E=\prod^r_{\alpha=1} p_\alpha^{e_\alpha} \qquad 
\big(p_\alpha\equiv 1 (\text{\rm mod\,} 4)\big).
$$
For each $E$, let 
\be\label {4.13}
f_E=\sum_{\xi\in\mathcal E_E}  a_\xi e(x.\xi)\qquad (a_{-\xi}=\bar a_\xi)
\ee
$$
\sum |a_\xi|^2=1
$$
be arbitrarily chosen, subject to the assumption that the probability measures
\be\label{4.14}
\rho_E =\sum_{\xi\in\mathcal E_E} |a_\xi|^2 \delta_{\lambda^{-1}\xi}\qquad (\lambda^2= E)
\ee
on the unit circle, converge weak$^*$ to the normalized Lebesque measure on $S^1$ for $E\to\infty$.

Denoting $N_E$ the number of nodal domains of $E$, we have that
\be\label{4.15}
\frac{N_E}E\to \bar\nu \text { for } E\to\infty.
\ee
\end{theorem}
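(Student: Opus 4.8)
The plan is to reduce Theorem~\ref{theorem4} to the scheme of Sections 2--3, re-running it with the uniform coefficients replaced by the weights $a_\xi$; the only genuinely new ingredient is a weighted version of Lemma~\ref{lemma2}. The role of assumption (D) in Section 2 is taken over directly by the hypothesis $\rho_E\to$ Lebesgue: I partition $S^1$ into $K$ arcs of equal Lebesgue measure and note that, since arc boundaries are Lebesgue-null, weak$^*$ convergence gives $w_k:=\sum_{\xi\in\mathcal E^{(k)}}|a_\xi|^2\to 1/K$, the analogue of \eqref{2.3}. Putting $c_k=w_k^{-1/2}\sum_{\xi\in\mathcal E^{(k)}}a_\xi\,e(\xi.x)$ and $\vp_x=\frac1{\sqrt K}\sum_k c_k\,e(\zeta^{(k)}.y)$, the proof of Lemma~\ref{lemma1} carries over verbatim, the error being bounded by $\big(\sum_k\sum_{\xi\in\mathcal E^{(k)}}|a_\xi|^2\,\big|\frac R\lambda\xi-\zeta^{(k)}\big|^2\big)^{1/2}\le C\,R\,K^{-1}$ because $\sum_\xi|a_\xi|^2=1$.

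Second, I would establish the weighted analogue of Lemma~\ref{lemma2}, i.e.\ that $(c_1,\dots,c_K)$ is $\ve$-Gaussian. The underlying observation, used repeatedly, is that weak$^*$ convergence of $\rho_E$ to the \emph{non-atomic} Lebesgue measure forces $A:=\max_\xi|a_\xi|\to0$. Expanding the moments \eqref{2.14} produces $\prod_k w_k^{-(r_k+s_k)/2}$ times a weighted count of the relations \eqref{2.17}, the weight of a relation being $\prod|a_\xi|$. The trivial solutions \eqref{2.19} again yield the Gaussian moments: they factor over arcs, and since the normalized weights $|a_\xi|^2/w_k\le KA^2\to0$ are infinitesimal, the Lindeberg central limit theorem applies to the phase model $\tilde c_k(\Psi)=w_k^{-1/2}\sum_\xi a_\xi\,e(\psi_\xi)$ exactly as in \eqref{2.20}--\eqref{2.21}.

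The heart of the matter is the non-trivial contribution, and here---in contrast with Proposition~\ref{proposition1}---I would use the full strength of the Evertse--Schlickewei--Schmidt bound \eqref{4.7}--\eqref{4.8} rather than merely $I(\gamma,B)$. Since $r$ is \emph{fixed}, fixing one element of a minimal relation of length $m$ turns $\xi_2+\dots+\xi_m=-\xi_1$ into a unit equation $\sum_{i\ge2} g_i=1$ in a multiplicative group of rank $\le r$, so by \eqref{4.7} it has only $C(m,r)=O_r(1)$ solutions. Writing $T_m$ for the weighted number of minimal relations of length $m\ge3$, summing over the first element and applying Cauchy--Schwarz together with this constant-completion bound gives $T_m\le C(m,r)\,A^{m-2}$ (the uniform case $A\sim W^{-1/2}$ recovering the exponent of \eqref{2.23}). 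Decomposing a non-trivial relation into minimal sub-sums, each length-$2$ piece is summed over a mirror pair of arcs and contributes the mass $w_k$, which cancels the normalization $w_k^{-1}$ of its two slots, while each longer piece contributes $K^{m_j/2}T_{m_j}$. As at least one piece has length $\ge3$, so that $\sum_j(m_j-2)\ge1$, the total non-trivial contribution is at most $C(\ell)\,K^{\ell/2}A$, which tends to $0$ with $K,\ell$ fixed.

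With the weighted Lemma~\ref{lemma2} available, the whole of Section~3 applies without change, as it invokes only Lemmas~\ref{lemma1} and~\ref{lemma2} and the results of \cite{S}; the limiting spectral measure of the associated Gaussian model \eqref{3.8} is $\frac1K\sum_k\delta_{\xi^{(k)}/\lambda}$, which converges to the normalized Lebesgue measure on $S^1$, so the Nazarov--Sodin constant is $\bar\nu$ and \eqref{4.15} follows. I expect the main obstacle to be exactly the weighted relation count: one must verify that the $O_r(1)$ completion bound survives the introduction of the weights and that concentration of the $|a_\xi|^2$---which weak$^*$ convergence controls only through $\max_\xi|a_\xi|\to0$, with no rate---does not spoil the estimate $T_m\le C(m,r)\,A^{m-2}$. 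It is the fixedness of $r$, and not any quantitative form of (D), that makes this estimate hold.
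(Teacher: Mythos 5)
Your proposal is correct and follows exactly the route the paper indicates in Remark \ref{remark3}: Theorem \ref{theorem4} is deduced from the Evertse--Schlickewei--Schmidt bound \eqref{4.7}--\eqref{4.8} at the fixed rank $r$ together with a weighted adaptation of Lemma \ref{lemma2}, after which the analysis of Sections 2--3 runs unchanged. The details you supply --- that weak$^*$ convergence to the non-atomic Lebesgue measure forces $\max_\xi|a_\xi|\to 0$, the Lindeberg form of the central limit theorem for the phase model, and the weighted relation count $T_m\le C(m,r)A^{m-2}$ --- are precisely the ``easy adaptation'' the paper leaves to the reader.
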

\medskip

\section
{Further comments}

An alternative approach would consist in considering `jets' of eigenfunctions.
Thus given an eigenfunction $f_E$ of $\mathbb T^2$, introduce at each point $x\in \mathbb T^2$ the scaled function
\be\label{5.1}
\vp(y) =\vp_x(y) =f\Big(x+\frac y\lambda\Big).
\ee
The function $\vp$ may be $\ve$-approximated on $[|y|<R]$ by truncation of its Taylor expansion at order
$B=B(R, \ve)$, leading to a jet
\be\label{5.2}
J_x=\{D^\alpha \vp_x|_{y=0}\}_{|\alpha|<B}.
\ee
Consider $J_x$ as a random vector in $x\in\mathbb T^2$.
Under assumptions (D) and (I), one may then show that the distribution of $(J_x)_{x\in\mathbb T^2}$ is approximatively the 
same as for the Gaussian random function with circular spectral measure and derive from this that $\frac{N_E}E\to\bar\nu$.
This approach has the advantage of at least conceptually generalizing to real analytic compact manifolds $M$.
Following \cite{S},  Section 2, one considers a map (assuming $\dim M=2$)
$$
\Phi_x =\exp_x \circ I_x:\mathbb R^2 \to M, \Phi_x (0)=x
$$
with $\exp_x: T_xM\to X$ the exponential map and $I_x :\mathbb R^2\to T_x(M)$ a linear Euclidean isometry.
The function $\vp_x$ is then defined by
\be\label{5.3}
\vp_x(y)=f\big(\Phi_x(\lambda^{-1} y)\big).
\ee
But we preferred to follow the procedure adopted earlier because it is more explicit and, in any case, we do not
have examples at this point, other than the flat torus, where the RWM may be implemented deterministically.

This discussion may however be of interest in the (arithmetic) hyperbolic case. (See \cite {G-R-S} for some 
remarkable new results on nodal domains in this setting).

Basically, the required behavior of the $(J_x)_{x\in M}$ may here in some sense be seen as a far generalization of the
Gaussian distribution conjecture of the eigenfunctions.

\end{document}